%2multibyte Version: 5.50.0.2960 CodePage: 1252
%% This document created by Scientific Word (R) Version 3.0

\documentclass{amsart}%
\usepackage{graphicx}
\usepackage{amscd}
\usepackage{amsmath}
\usepackage{amsfonts}
\usepackage[unicode=true]{hyperref}
\usepackage{amssymb}%
\setcounter{MaxMatrixCols}{30}
%TCIDATA{OutputFilter=latex2.dll}
%TCIDATA{Version=5.50.0.2960}
%TCIDATA{Codepage=1252}
%TCIDATA{CSTFile=amsartci.cst}
%TCIDATA{Created=Thu Dec 14 10:40:40 2006}
%TCIDATA{LastRevised=Wednesday, June 20, 2018 12:12:18}
%TCIDATA{<META NAME="GraphicsSave" CONTENT="32">}
%TCIDATA{<META NAME="SaveForMode" CONTENT="1">}
%TCIDATA{BibliographyScheme=Manual}
%TCIDATA{<META NAME="DocumentShell" CONTENT="Journal Articles\AMS Journal Article">}
%TCIDATA{Language=American English}
%BeginMSIPreambleData
\providecommand{\U}[1]{\protect\rule{.1in}{.1in}}
%EndMSIPreambleData
\newtheorem{theorem}{Theorem}
\theoremstyle{plain}

\newtheorem{corollary}{Corollary}

\newtheorem{example}{Example}

\newtheorem{remark}{Remark}

\numberwithin{equation}{section}

\begin{document}
\title[Addendum BMO]{Addendum to \textquotedblleft BMO: OSCILLATIONS, SELF IMPROVEMENT, GAGLIARDO
COORDINATE SPACES AND REVERSE HARDY INEQUALITIES"}
\author{Mario Milman}
\email{mario.milman@gmail.com}
\urladdr{https://sites.google.com/site/mariomilman}

\begin{abstract}
We provide a precise statement and self contained proof of a Sobolev
inequality (cf. \cite[ page 236 and page 237]{A}) stated in the original
paper. Higher order and fractional inequalities are treated as well.

\end{abstract}
\maketitle

\section{Introduction}

One of the purposes of the original paper (cf. \cite{A}) was to highlight some
connections between interpolation theory, and inequalities connected with the
theory of $BMO$ and Sobolev spaces. This resulted in a somewhat lengthy paper
and as consequence many known results were only stated, and the reader was
referred to the relevant literature for proofs. It has become clear, however,
that a complete account of some of the results could be useful. In this
expository addendum we update and correct one paragraph of the original text
by providing a precise statement and proof of a Sobolev inequality which was
stated in the original paper (cf. \cite[(13) page 236, and line 10, page
237]{A}). Included as well are the corresponding results for higher order and
fractional inequalities.

All the results discussed in this note are known\footnote{In presenting the
results yet again we have followed in part advise from Rota \cite{82}.}: The
only novelty is perhaps in the unified presentation.

We shall follow the notation and the ordering of references of the original
paper \cite{A} to which we shall also refer for background, priorities,
historical comments, etc. Newly referenced papers will be labeled with letters.

\section{The Hardy-Littlewood-Sobolev-O'Neil program}

We let%
\begin{equation}
\left\Vert f\right\Vert _{L(p,q)}=\left\{
\begin{array}
[c]{cc}%
\left\{  \int_{0}^{\infty}\left(  f^{\ast}(t)t^{1/p}\right)  ^{q}\frac{dt}%
{t}\right\}  ^{1/q} & 1\leq p<\infty,1\leq q\leq\infty\\
\left\Vert f\right\Vert _{L(\infty,q)} & 1\leq q\leq\infty,
\end{array}
\right.  \label{berta}%
\end{equation}
where%
\begin{equation}
\left\Vert f\right\Vert _{L(\infty,q)}:=\left\{  \int_{0}^{\infty}(f^{\ast
\ast}(t)-f^{\ast}(t))^{q}\frac{dt}{t}\right\}  ^{1/q}. \label{berta1}%
\end{equation}
In particular we note that in this notation%
\[
\left\Vert f\right\Vert _{L(\infty,\infty)}=\{f:\sup_{t>0}\{f^{\ast\ast
}(t)-f^{\ast}(t)\}<\infty\},
\]%
\[
L(1,1)=L^{1}.
\]
Moreover, if $f$ has bounded support%
\[
\left\Vert f\right\Vert _{L(\infty,1)}=\left\Vert f\right\Vert _{L^{\infty}}.
\]
In \cite[(13) page 236]{A} we stated that \textquotedblleft it was shown in
\cite{7} that
\begin{equation}
\left\Vert f\right\Vert _{L(\bar{p},q)}\leq c_{n}\left\Vert \nabla
f\right\Vert _{L(p,q)},1\leq p\leq n,\frac{1}{\bar{p}}=\frac{1}{p}-\frac{1}%
{n},\text{ }1\leq q\leq\infty,f\in C_{0}^{\infty}(R^{n})." \label{sobolev}%
\end{equation}
However, to correctly state what was actually shown in \cite{7}, the indices
in the displayed formula need to be restricted when $p=1$. The precise
statement reads as follows:

\begin{theorem}
\label{teo1}Let $n>1.$ Let $1\leq p\leq n,$ $1\leq q\leq\infty,$ and define
$\frac{1}{\bar{p}}=\frac{1}{p}-\frac{1}{n}.$ Then, if $(p,q)\in(1,n]\times
\lbrack1,\infty]$ \textbf{or if} $p=q=1,$ we have%
\begin{equation}
\left\Vert f\right\Vert _{L(\bar{p},q)}\leq c_{n}(p,q)\left\Vert \left\vert
\nabla f\right\vert \right\Vert _{L(p,q)},\text{ }f\in C_{0}^{\infty}(R^{n}).
\label{sobol1}%
\end{equation}

\end{theorem}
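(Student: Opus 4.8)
The plan is to base everything on a single pointwise rearrangement inequality and then to split off the endpoint $p=q=1$, which genuinely requires a separate argument.

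\emph{Step 1 (the master inequality).} I would first prove that for every $f\in C_{0}^{\infty}(R^{n})$ and every $t>0$,
\[
f^{\ast\ast}(t)-f^{\ast}(t)\leq\frac{1}{n\omega_{n}^{1/n}}\,t^{1/n}\,\left\vert \nabla f\right\vert ^{\ast\ast}(t),\qquad\omega_{n}=\left\vert B_{1}\right\vert .
\]
The starting point is the elementary identity $f^{\ast\ast}(t)-f^{\ast}(t)=\frac{1}{t}\int_{f^{\ast}(t)}^{\infty}\mu_{f}(\lambda)\,d\lambda$, where $\mu_{f}(\lambda)=\left\vert \{\left\vert f\right\vert >\lambda\}\right\vert $, obtained by a layer--cake/Fubini computation; note that $\mu_{f}(\lambda)\leq t$ for $\lambda\geq f^{\ast}(t)$. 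For such $\lambda$ one writes $\mu_{f}(\lambda)=\mu_{f}(\lambda)^{1/n}\mu_{f}(\lambda)^{1-1/n}\leq t^{1/n}\mu_{f}(\lambda)^{1-1/n}$ and then combines the isoperimetric inequality $n\omega_{n}^{1/n}\mu_{f}(\lambda)^{1-1/n}\leq P(\{\left\vert f\right\vert >\lambda\})$ with the coarea formula applied to $\left\vert f\right\vert $ to obtain $\int_{f^{\ast}(t)}^{\infty}\mu_{f}(\lambda)^{1-1/n}\,d\lambda\leq(n\omega_{n}^{1/n})^{-1}\int_{\{\left\vert f\right\vert >f^{\ast}(t)\}}\left\vert \nabla f\right\vert $. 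Since $\left\vert \{\left\vert f\right\vert >f^{\ast}(t)\}\right\vert \leq t$, the Hardy--Littlewood rearrangement inequality bounds the last integral by $\int_{0}^{t}\left\vert \nabla f\right\vert ^{\ast}=t\left\vert \nabla f\right\vert ^{\ast\ast}(t)$, and assembling these estimates gives the displayed bound.

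\emph{Step 2 (the range $1<p\leq n$).} Since $f^{\ast\ast}(\infty)=0$ and $\frac{d}{ds}f^{\ast\ast}(s)=-\frac{1}{s}(f^{\ast\ast}(s)-f^{\ast}(s))$, integrating Step 1 gives $f^{\ast}(t)\leq f^{\ast\ast}(t)\leq c_{n}\int_{t}^{\infty}s^{1/n}\left\vert \nabla f\right\vert ^{\ast\ast}(s)\,\frac{ds}{s}$. For $p=n$ I would use Step 1 directly: $\left\Vert f\right\Vert _{L(\infty,q)}=\left\Vert f^{\ast\ast}-f^{\ast}\right\Vert _{L^{q}(dt/t)}\leq c_{n}\left\Vert t^{1/n}\left\vert \nabla f\right\vert ^{\ast\ast}\right\Vert _{L^{q}(dt/t)}\leq c_{n}^{\prime}\left\Vert t^{1/n}\left\vert \nabla f\right\vert ^{\ast}\right\Vert _{L^{q}(dt/t)}$, the middle inequality being Hardy's inequality for the averaging operator $g\mapsto g^{\ast\ast}$, legitimate because $1/n<1$, i.e.\ $n>1$. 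For $1<p<n$ one has $\left\Vert f\right\Vert _{L(\bar{p},q)}=\left\Vert t^{1/\bar{p}}f^{\ast}\right\Vert _{L^{q}(dt/t)}\leq c_{n}\left\Vert t^{1/\bar{p}}\int_{t}^{\infty}s^{1/n}\left\vert \nabla f\right\vert ^{\ast\ast}(s)\frac{ds}{s}\right\Vert _{L^{q}(dt/t)}$, and two Hardy inequalities finish the job: the descending (dual) Hardy inequality, legitimate because $1/\bar{p}=1/p-1/n>0$, bounds this by $c_{n}^{\prime}\left\Vert t^{1/p}\left\vert \nabla f\right\vert ^{\ast\ast}\right\Vert _{L^{q}(dt/t)}$, and Hardy's inequality for $g\mapsto g^{\ast\ast}$ — legitimate precisely because $1/p<1$, i.e.\ $p>1$ — bounds that by $c_{n}^{\prime\prime}\left\Vert t^{1/p}\left\vert \nabla f\right\vert ^{\ast}\right\Vert _{L^{q}(dt/t)}=c_{n}^{\prime\prime}\left\Vert \left\vert \nabla f\right\vert \right\Vert _{L(p,q)}$. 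All Hardy inequalities invoked are valid for every $q\in[1,\infty]$, which is why the whole range of $q$ is available as soon as $p>1$.

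\emph{Step 3 (the endpoint $p=q=1$).} Here the last Hardy step degenerates ($1/p=1$), so a different route is needed. I would combine Maz'ya's truncation with the classical inequality $\left\Vert g\right\Vert _{L^{n/(n-1)}}\leq c_{n}\left\Vert \nabla g\right\Vert _{L^{1}}$ (valid for Lipschitz $g$ with compact support). For each integer $k$ set $f_{k}=\min\{(\left\vert f\right\vert -2^{k})_{+},2^{k}\}$, so that $\left\vert \nabla f_{k}\right\vert =\left\vert \nabla f\right\vert \chi_{\{2^{k}<\left\vert f\right\vert <2^{k+1}\}}$ a.e.\ and $f_{k}\geq2^{k}$ on $\{\left\vert f\right\vert >2^{k+1}\}$; the classical inequality applied to $f_{k}$ gives $2^{k}\mu_{f}(2^{k+1})^{(n-1)/n}\leq c_{n}\int_{\{2^{k}<\left\vert f\right\vert <2^{k+1}\}}\left\vert \nabla f\right\vert $, and summing over $k$ (the sets are essentially disjoint) yields $\sum_{k}2^{k}\mu_{f}(2^{k+1})^{(n-1)/n}\leq c_{n}\left\Vert \nabla f\right\Vert _{L^{1}}$. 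On the other hand the layer--cake identity gives $\left\Vert f\right\Vert _{L(n/(n-1),1)}=\frac{n}{n-1}\int_{0}^{\infty}\mu_{f}(\lambda)^{(n-1)/n}\,d\lambda$, and since $\mu_{f}$ is decreasing, $\int_{0}^{\infty}\mu_{f}(\lambda)^{(n-1)/n}d\lambda\leq\sum_{k}2^{k}\mu_{f}(2^{k})^{(n-1)/n}=2\sum_{k}2^{k}\mu_{f}(2^{k+1})^{(n-1)/n}$; combining, $\left\Vert f\right\Vert _{L(n/(n-1),1)}\leq c_{n}^{\prime}\left\Vert \nabla f\right\Vert _{L^{1}}$, which is the asserted inequality when $p=q=1$.

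\emph{Main obstacle.} The technical heart is Step 1, and within it the careful handling of the isoperimetric inequality and the coarea formula for the merely Lipschitz function $\left\vert f\right\vert $: one must pass from $f$ to $\left\vert f\right\vert $, use Sard's theorem for $f$ so that the superlevel sets $\{\left\vert f\right\vert >\lambda\}$ have smooth boundary for a.e.\ $\lambda$ (so that the perimeter there is just $\mathcal{H}^{n-1}$ of the level set), and justify the layer--cake identity for $f^{\ast\ast}-f^{\ast}$. Once the master inequality is established, Steps 2 and 3 are routine applications of Hardy-type inequalities, the Hardy--Littlewood rearrangement inequality, Maz'ya's truncation, and layer--cake formulas; the one conceptual point there is recognizing that $p=1$ must be separated because Hardy's inequality for $g\mapsto g^{\ast\ast}$ fails at $p=1$, and that what is then needed is exactly the refined embedding into $L(n/(n-1),1)$ rather than into $L^{n/(n-1)}$.
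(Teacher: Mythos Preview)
Your argument is correct. Steps~1 and~2 coincide with the paper's approach: the paper also bases the range $1<p\le n$ on the pointwise bound $f^{\ast\ast}(t)-f^{\ast}(t)\le c_{n}t^{1/n}|\nabla f|^{\ast\ast}(t)$ (which it cites rather than reproving, so your Step~1 supplies a detail the paper omits), and then reaches the $L(\bar p,q)$ norm by the same combination of the identity $f^{\ast\ast}(t)=\int_{t}^{\infty}(f^{\ast\ast}-f^{\ast})\,\frac{ds}{s}$ and Hardy inequalities, separating $p=n$ from $p<n$ exactly as you do. The genuine divergence is at the endpoint $p=q=1$. The paper invokes a second, \emph{integrated} symmetrization inequality,
\[
\int_{0}^{t}\bigl(f^{\ast\ast}(s)-f^{\ast}(s)\bigr)s^{1-1/n}\,\frac{ds}{s}\preceq\int_{0}^{t}(\nabla f)^{\ast}(s)\,ds,
\]
cited from \cite{A}, and simply lets $t\to\infty$; you instead bootstrap the classical $W^{1,1}\hookrightarrow L^{n'}$ inequality to the sharper Lorentz target $L(n',1)$ via Maz'ya truncation and a dyadic layer--cake sum. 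Your route is more self-contained in that it takes only the classical Gagliardo--Nirenberg inequality as input, whereas the paper's route keeps the entire proof within one family of rearrangement inequalities and treats the strong $L(n',1)$ embedding as an output rather than an ingredient. The two viewpoints are in fact reconciled later in the paper (Section~5.2), where it is shown that the integrated inequality above, the strong embedding $\|f\|_{L(n',1)}\preceq\|\nabla f\|_{L^{1}}$, the weak embedding, and the isoperimetric inequality are all equivalent, with the truncation principle cited explicitly.
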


\begin{remark}
If $n=1,$ then $p=q=1,$ and (\ref{sobol1}) is an easy consequence of the
fundamental theorem of Calculus.
\end{remark}

The corresponding higher order result (cf. \cite[line 10, page 237]{A}) reads
as follows,

\begin{theorem}
\label{teo2}Let $k\in N$, $k\leq n,$ $1\leq p\leq\frac{n}{k},$ $1\leq
q\leq\infty.$ Define $\frac{1}{\bar{p}}=\frac{1}{p}-\frac{k}{n}.$ Then, (i) if
$k<n,$ and $(p,q)\in(1,\frac{n}{k}]\times\lbrack1,\infty],$ or $(p,q)\in
\{1\}\times\{1\},$ or (ii) if $n=k,$ and $p=q=1,$ we have%
\[
\left\Vert f\right\Vert _{L(\bar{p},q)}\leq c_{n,k}(p,q)\left\Vert \left\vert
D^{k}f\right\vert \right\Vert _{L(p,q)},\text{ }f\in C_{0}^{\infty}(R^{n}),
\]
where $\left\vert D^{k}f\right\vert $ is the length of the vector whose
components are all the partial derivatives of order $k.$
\end{theorem}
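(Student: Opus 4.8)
The plan is to obtain \thmref{teo2} from \thmref{teo1} by induction on $k$. The base case $k=1$ is exactly \thmref{teo1} when $n>1$, and the Remark following it when $n=1$. So suppose $2\le k\le n$, assume the conclusion of \thmref{teo2} in order $k-1$ for \emph{every} admissible index pair, and let $(p,q)$ be admissible for order $k$. Introduce the intermediate index $r$ by $\frac1r=\frac1p-\frac{k-1}{n}$, chosen precisely so that $\frac1{\bar p}=\frac1r-\frac1n$.

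First I would apply \thmref{teo1} with the pair $(r,q)$ to the function $f$, obtaining
\[
\|f\|_{L(\bar p,q)}\le c_n(r,q)\,\bigl\||\nabla f|\bigr\|_{L(r,q)}.
\]
Since $|\nabla f|\le\sum_{i=1}^n|\partial_i f|$ pointwise, monotonicity of the decreasing rearrangement and subadditivity of $g\mapsto g^{\ast\ast}$ give $\bigl\||\nabla f|\bigr\|_{L(r,q)}\le C\sum_{i=1}^n\|\partial_i f\|_{L(r,q)}$, after passing (when $r>1$) to the $f^{\ast\ast}$-based norm that is equivalent to $\|\cdot\|_{L(r,q)}$ and genuinely subadditive; this is legitimate because $r>1$ in the inductive step, and in the one degenerate case $r=1$ one has $L(1,1)=L^{1}$. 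Next, each $\partial_i f$ lies in $C_0^\infty(R^n)$ and $\frac1r=\frac1p-\frac{k-1}{n}$, so the order-$(k-1)$ inequality applied to $\partial_i f$ gives $\|\partial_i f\|_{L(r,q)}\le c_{n,k-1}(p,q)\,\bigl\||D^{k-1}\partial_i f|\bigr\|_{L(p,q)}$. Finally $|D^{k-1}\partial_i f|\le|D^kf|$ pointwise, because every partial derivative of order $k-1$ of $\partial_i f$ is a partial derivative of order $k$ of $f$; monotonicity of the rearrangement then bounds the last quantity by $c_{n,k-1}(p,q)\,\bigl\||D^kf|\bigr\|_{L(p,q)}$. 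Chaining the three estimates yields the order-$k$ inequality with $c_{n,k}(p,q)=C n\,c_n(r,q)\,c_{n,k-1}(p,q)$.

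The step I expect to be the real work — everything else being routine bookkeeping of constants — is verifying that the two invocations of the known inequalities stay inside their admissible ranges, so that the induction closes. The key facts to record are: from $p\le n/k$ one has $\frac1r=\frac1p-\frac{k-1}{n}\ge\frac1n$, hence $r\le n$; also $r>1$, either because $p>1$ already, or because $p=1$ forces $\frac1r=1-\frac{k-1}{n}<1$ (using $2\le k\le n$). Thus \thmref{teo1} applies at $(r,q)$ for every $q$. For the order-$(k-1)$ step, $1\le p\le n/k<n/(k-1)$, and since $n\ge k>k-1$ the sub-problem can only be in case (i) of \thmref{teo2}; moreover, if $p=1$ then admissibility of $(p,q)$ in order $k$ forces $q=1$, which is exactly what is needed to run the order-$(k-1)$ inequality at $(1,1)$. (Alternatively, one can avoid the induction: the same result follows from the pointwise bound $|f|\le c_{n,k}\,I_k(|D^kf|)$, obtained by iterating the first-order potential estimate, together with O'Neil's boundedness of the Riesz potential $I_k$ between the relevant Lorentz spaces; but the inductive argument keeps the account self-contained, using only \thmref{teo1}.)
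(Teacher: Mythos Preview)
Your proof is correct and follows essentially the same inductive scheme as the paper: peel off one order of differentiation via \thmref{teo1} and reduce to the order-$(k-1)$ statement, with the index bookkeeping you record being precisely what is needed to keep both invocations inside their admissible ranges. The only cosmetic difference is that the paper (for $k=2$) first applies the symmetrization inequality directly to the scalar $|\nabla f|$, using $|\nabla(\nabla f)|\le|D^2 f|$, to pass from $L(p,q)$ to $L(\bar p_1,q)$ and then invokes \thmref{teo1} at level $\bar p_1$, whereas you invoke \thmref{teo1} first and then the inductive hypothesis componentwise on each $\partial_i f$, summing via the (quasi\nobreakdash-)triangle inequality in $L(r,q)$; since $r>1$ in the inductive step this is legitimate, and the two orderings produce the same chain of estimates.
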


\begin{remark}
Observe that when $p=\frac{n}{k},$ $p>1$, and $q=1,$ we have
\[
\left\Vert f\right\Vert _{L^{\infty}}=\left\Vert f\right\Vert _{L(\infty
,1)}\preceq\left\Vert \left\vert D^{k}f\right\vert \right\Vert _{L(\frac{n}%
{k},1)},\text{ }f\in C_{0}^{\infty}(R^{n}).
\]
We also obtain an $L^{\infty}$ estimate when $p=\frac{n}{k}=1,$ and $q=1,$%
\begin{equation}
\left\Vert f\right\Vert _{L^{\infty}}=\left\Vert f\right\Vert _{L(\infty
,1)}\preceq\left\Vert D^{n}f\right\Vert _{L^{1}},\text{ }f\in C_{0}^{\infty
}(R^{n}). \label{deacuerdo}%
\end{equation}

\end{remark}

In the particular case when we are working with $L^{p}$ spaces, i.e. $p=q$,
\textbf{there is no need to separate the cases }$p=1$\textbf{ and }$p>1,$ and
Theorems \ref{teo1} and \ref{teo2} give us what we could call the
\textquotedblleft completion" of the Hardy-Littlewood-Sobolev-O'Neil program, namely

\begin{corollary}
Let $1\leq k\leq n,1\leq p\leq\frac{n}{k},\frac{1}{\bar{p}}=\frac{1}{p}%
-\frac{k}{n}.$ Then%
\begin{equation}
\left\Vert f\right\Vert _{L(\bar{p},p)}\leq c_{n}(p)\left\Vert D^{k}%
f\right\Vert _{L(p,p)},\text{ }f\in C_{0}^{\infty}(R^{n}). \label{sobsup}%
\end{equation}

\end{corollary}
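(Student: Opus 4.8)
The plan is to obtain the Corollary as the special case $p=q$ of Theorems~\ref{teo1} and~\ref{teo2}, the only point being to check that the hypotheses of those theorems are met throughout the full range $1\le p\le \tfrac nk$ once we set $q=p$. First I would treat the case $k=1$ using Theorem~\ref{teo1}. If $p>1$, then $(p,q)=(p,p)\in(1,n]\times[1,\infty]$, so \eqref{sobol1} applies directly and gives $\left\Vert f\right\Vert_{L(\bar p,p)}\le c_n(p,p)\left\Vert |\nabla f|\right\Vert_{L(p,p)}$. If $p=1$, then $q=p=1$ as well, so we are in the admissible exceptional case $p=q=1$ of Theorem~\ref{teo1}, and the same inequality holds. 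Thus for every $p\in[1,n]$ the estimate \eqref{sobsup} with $k=1$ follows, with $c_n(p):=c_n(p,p)$.

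For general $1\le k\le n$ I would argue identically using Theorem~\ref{teo2}. Suppose first $k<n$. If $p>1$ then $(p,p)\in(1,\tfrac nk]\times[1,\infty]$, which is case~(i) of Theorem~\ref{teo2}, so the higher-order inequality gives $\left\Vert f\right\Vert_{L(\bar p,p)}\le c_{n,k}(p,p)\left\Vert |D^{k}f|\right\Vert_{L(p,p)}$. If $p=1$ then again $q=p=1$, so $(p,q)\in\{1\}\times\{1\}$, which is the exceptional case admitted in~(i), and the inequality persists. If instead $k=n$, the constraint $1\le p\le \tfrac nk=1$ forces $p=1$, hence $q=p=1$, which is exactly case~(ii) of Theorem~\ref{teo2}; the inequality holds here too. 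In all cases we set $c_n(p):=c_{n,k}(p,p)$ and obtain \eqref{sobsup}. One should also note, for the endpoint $p=\tfrac nk$, that $\tfrac1{\bar p}=0$, i.e. $\bar p=\infty$, and that $L(\infty,\infty)$ on the left of \eqref{sobsup} is then understood via \eqref{berta1}; this causes no difficulty since Theorems~\ref{teo1} and~\ref{teo2} already cover that endpoint.

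The content of the Corollary is therefore purely bookkeeping: the point the author wishes to stress (as the displayed sentence preceding the statement makes explicit) is that on the diagonal $p=q$ the awkward restriction separating $p=1$ from $p>1$ in Theorems~\ref{teo1} and~\ref{teo2} evaporates, because imposing $q=p$ automatically lands the exceptional value $p=1$ in the admissible corner $q=1$. There is essentially no obstacle to overcome; the only thing to be careful about is not to misstate the endpoint $\bar p=\infty$ and to keep the notation for the constant consistent (writing $c_n(p)$ rather than dragging along the two-parameter constant $c_{n,k}(p,q)$). I would write the proof as a short paragraph that simply specializes $q=p$ in each theorem and verifies the hypotheses, with the $k=1$ and $k\ge 2$ (and $k=n$) cases dispatched in one sentence each.
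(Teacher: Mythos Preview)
Your proposal is correct and matches the paper's treatment: the Corollary is stated immediately after Theorems~\ref{teo1} and~\ref{teo2} with no separate proof, precisely because it is the specialization $q=p$ that you describe, and your case-check that $p=1$ automatically forces $q=1$ (hence lands in the admissible exceptional case) is exactly the point the paper is making in the sentence preceding the Corollary. One small slip to fix in your write-up: at the endpoint $p=\tfrac{n}{k}$ the left-hand side is $L(\bar p,p)=L(\infty,\tfrac{n}{k})$, not $L(\infty,\infty)$, since $q=p=\tfrac{n}{k}$ is finite.
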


\begin{proof}
\textbf{(of Theorem \ref{teo1})}. \textbf{The case }$1<p\leq n.$ We start with
the inequality (cf. \cite[(58) page 263]{A}),%
\begin{equation}
f^{\ast\ast}(t)-f^{\ast}(t)\leq c_{n}t^{1/n}(\nabla f)^{\ast\ast}(t),
\label{sob2}%
\end{equation}
which yields%
\begin{equation}
\left(  f^{\ast\ast}(t)-f^{\ast}(t)\right)  t^{1/p}t^{-1n}\leq c_{n}%
t^{1/p}(\nabla f)^{\ast\ast}(t). \label{sobol2}%
\end{equation}
If $q<\infty,$ we integrate (\ref{sobol2}) and find%
\begin{align*}
\left\{  \int_{0}^{\infty}[\left(  f^{\ast\ast}(t)-f^{\ast}(t)\right)
t^{1/\bar{p}}]^{q}\frac{dt}{t}\right\}  ^{1/q}  &  \leq c_{n}\left\{  \int%
_{0}^{\infty}[t^{1/p}(\nabla f)^{\ast\ast}(t)]^{q}\frac{dt}{t}\right\}
^{1/q}\\
&  \leq C_{n}(p,q)\left\Vert \nabla f\right\Vert _{L(p,q)},
\end{align*}
where in the last step we used Hardy's inequality (cf. \cite[Appendix 4, page
272]{St}). To identify the left hand side we consider two cases. If $p=n,$
then $\bar{p}=\infty$ and the desired result follows directly from the
definitions (cf. (\ref{berta})). If $p<n,$ then we can write%
\begin{equation}
f^{\ast\ast}(t)=\int_{t}^{\infty}f^{\ast\ast}(s)-f^{\ast}(s)\frac{ds}{s},
\label{from}%
\end{equation}
and use Hardy's inequality (cf. \cite[Appendix 4, page 272]{St}) to get%
\[
\left\Vert f\right\Vert _{L(\bar{p},q)}\leq\left\{  \int_{0}^{\infty}%
[f^{\ast\ast}(t)t^{1/\bar{p}}]^{q}\frac{dt}{t}\right\}  ^{1/q}\preceq\left\{
\int_{0}^{\infty}[\left(  f^{\ast\ast}(t)-f^{\ast}(t)\right)  t^{1/\bar{p}%
}]^{q}\frac{dt}{t}\right\}  ^{1/q}.
\]
The case $q=\infty$ is easier. Indeed, if $p=n,$ the desired result follows
taking a sup in (\ref{sobol2})$,$ while if $p<n,$ from (\ref{from}) we find%
\begin{align*}
f^{\ast\ast}(t)  &  \leq\int_{t}^{\infty}\left(  f^{\ast\ast}(s)-f^{\ast
}(s)\right)  s^{1/\bar{p}}s^{-1/\bar{p}}\frac{ds}{s}\\
&  \preceq t^{-1/\bar{p}}\sup_{s}\left(  f^{\ast\ast}(s)-f^{\ast}(s)\right)
s^{1/\bar{p}}.
\end{align*}
Consequently%
\[
\left\Vert f\right\Vert _{L(\bar{p},\infty)}\preceq\sup_{s}\left(  f^{\ast
\ast}(s)-f^{\ast}(s)\right)  s^{1/\bar{p}}.
\]
Therefore, combining the estimates we have obtained for the right and left
hand sides, we obtain%
\[
\left\Vert f\right\Vert _{L(\bar{p},\infty)}\preceq\left\Vert \nabla
f\right\Vert _{L(p,q)},1<p\leq n,1\leq q\leq\infty.
\]

Finally, we consider the case when\textbf{ }$p=q=1.$ In this case we have
$\frac{1}{\bar{p}}=1-\frac{1}{n}.$ At this point recall the inequality (cf.
\cite[page 264]{A})%
\begin{align}
\int_{0}^{t}\left(  f^{\ast\ast}(s)-f^{\ast}(s)\right)  s^{1/\bar{p}}\frac
{ds}{s}  &  =\int_{0}^{t}\left(  f^{\ast\ast}(s)-f^{\ast}(s)\right)
s^{1-1/n}\frac{ds}{s}\\
&  \preceq\int_{0}^{t}(\nabla f)^{\ast}(s)ds. \label{dav1}%
\end{align}
Let $t\rightarrow\infty,$ to find%
\[
\int_{0}^{\infty}\left(  f^{\ast\ast}(s)-f^{\ast}(s)\right)  s^{1/\bar{p}%
}\frac{ds}{s}\preceq c_{n}\int_{0}^{\infty}(\nabla f)^{\ast}(s)ds=c_{n}%
\left\Vert \nabla f\right\Vert _{L^{1}}=c_{n}\left\Vert \nabla f\right\Vert
_{L(1,1)}.
\]
We conclude the proof remarking that, as we have seen before,%
\[
\left\Vert f\right\Vert _{L(\bar{p},1)}\preceq\int_{0}^{\infty}\left(
f^{\ast\ast}(s)-f^{\ast}(s)\right)  s^{1/\bar{p}}\frac{ds}{s}.
\]

\end{proof}

\section{Higher Order}

We will only deal in detail with the case $k=2$ (i.e. the case of second order
derivatives) since the general case follows by induction,\textit{ mutatis
mutandi}.

\begin{proof}
(i) Suppose first that $n>2.$ Let $\bar{p}_{1}$ and $\bar{p}_{2}$ be defined
by $\frac{1}{\bar{p}_{1}}=\frac{1}{p}-\frac{1}{n}$ and $\frac{1}{\bar{p}_{2}%
}=\frac{1}{\bar{p}_{1}}-\frac{1}{n}=\frac{1}{p}-\frac{2}{n}=\frac{1}{\bar{p}%
}.$ The first step of the iteration is to observe (cf. \cite{75}) the
elementary fact:%
\[
\left\vert \nabla(\nabla f)\right\vert \leq\left\vert D^{2}(f)\right\vert .
\]
Therefore, by (\ref{sobol1}) we have%
\begin{align*}
(\nabla f)^{\ast\ast}(t)-(\nabla f)^{\ast}(t)  &  \preceq t^{1/n}%
[\nabla(\nabla f)]^{\ast\ast}(t)\\
&  \preceq t^{1/n}\left\vert D^{2}(f)\right\vert ^{\ast\ast}(t).
\end{align*}
Consequently, we find%
\begin{equation}
\left(  (\nabla f)^{\ast\ast}(t)-(\nabla f)^{\ast}(t)\right)  t^{1/\bar{p}%
_{1}}\preceq t^{\frac{1}{p}}\left\vert D^{2}(f)\right\vert ^{\ast\ast}(t).
\label{sob4}%
\end{equation}
Suppose that $1<p\leq\frac{n}{2},$ and let $1\leq q<\infty.$ Then, from
(\ref{sob4}) and a familiar argument, we get
\begin{align*}
\left\Vert \nabla f\right\Vert _{L(\bar{p}_{1},q)}  &  \preceq\left\{
\int_{0}^{\infty}[\left(  (\nabla f)^{\ast\ast}(t)-(\nabla f)^{\ast
}(t)\right)  t^{1/\bar{p}_{1}}]^{q}\frac{dt}{t}\right\}  ^{1/q}\\
&  \preceq\left\{  \int_{0}^{\infty}[t^{1/p}\left\vert D^{2}(f)\right\vert
^{\ast\ast}(t)]^{q}\frac{dt}{t}\right\}  ^{1/q}.
\end{align*}
Thus,%
\[
\left\Vert \nabla f\right\Vert _{L(\bar{p}_{1},q)}\preceq\left\Vert \left\vert
D^{2}(f)\right\vert \right\Vert _{L(p,q)}.
\]
Now, combining the previous inequality with the already established first
order case (cf. Theorem \ref{teo1}) we find,%
\begin{align*}
\left\Vert f\right\Vert _{L(\bar{p}_{2},q)}  &  \preceq\left\Vert \nabla
f\right\Vert _{L(\bar{p}_{1},q)}\\
&  \preceq\left\Vert \left\vert D^{2}(f)\right\vert \right\Vert _{L(p,q)}.
\end{align*}
Likewise we can treat the case when $q=\infty.$ The analysis also works in the
case $p=1=q.$ In this case we replace (\ref{sob4}) with (\ref{dav1}):
\[
\int_{0}^{t}\left(  \nabla f)^{\ast\ast}(s)-(\nabla f)^{\ast}(s)\right)
s^{1-1/n}\frac{ds}{s}\preceq\int_{0}^{t}(D^{2}f)^{\ast}(s)ds,
\]
which yields%
\[
\int_{0}^{\infty}\left(  \nabla f)^{\ast\ast}(s)-(\nabla f)^{\ast}(s)\right)
s^{1-1/n}\frac{ds}{s}\preceq\int_{0}^{\infty}(D^{2}f)^{\ast}(s)ds.
\]
Therefore%
\[
\left\Vert \nabla f\right\Vert _{L(\bar{p}_{1},1)}\preceq\int_{0}^{\infty
}(D^{2}f)^{\ast}(s)ds.
\]
At this point recall that the first order case gives us%
\[
\left\Vert f\right\Vert _{L(\bar{p}_{2},1)}\preceq\left\Vert \nabla
f\right\Vert _{L(\bar{p}_{1},1)}.
\]
Thus,%
\[
\left\Vert f\right\Vert _{L(\bar{p}_{2},1)}\preceq\left\Vert D^{2}f\right\Vert
_{L^{1}}.
\]

Finally consider the case when $n=2=$ $k,$ this means that $p=\frac{2}{2}=1,$
and we let $q=1.$ Then, from%
\[
\int_{0}^{t}\left(  (Df)^{\ast\ast}(s)-\left(  Df\right)  ^{\ast}(s)\right)
s^{1-1/2}\frac{ds}{s}\preceq\int_{0}^{t}(D^{2}f)^{\ast}(s)ds
\]
we once again derive%
\[
\left\Vert \nabla f\right\Vert _{L(2,1)}\preceq\left\Vert D^{2}f\right\Vert
_{L^{1}}.
\]
Moreover, since
\[
\left(  f^{\ast\ast}(t)-f^{\ast}(t)\right)  \preceq t^{1/2}\left(  \nabla
f\right)  ^{\ast\ast}(t)
\]
integrating we get%
\[
\left\Vert f\right\Vert _{L(\infty,1)}\preceq\left\Vert \nabla f\right\Vert
_{L(2,1)},
\]
consequently, we see that,%
\[
\left\Vert f\right\Vert _{L^{\infty}}=\left\Vert f\right\Vert _{L(\infty
,1)}\preceq\left\Vert D^{2}f\right\Vert _{L^{1}}.
\]

\end{proof}

\begin{example}
In the case $n>2,p=\frac{n}{2},$ $q=1,$ we have%
\[
\left\Vert f\right\Vert _{L(\infty,1)}\preceq\left\Vert \nabla f\right\Vert
_{L(n,1)}\preceq\left\Vert D^{2}f\right\Vert _{L(\frac{n}{2},1)},
\]
in other words%
\begin{equation}
\left\Vert f\right\Vert _{L^{\infty}}\preceq\left\Vert D^{2}f\right\Vert
_{L(\frac{n}{2},1)}. \label{steine}%
\end{equation}

\end{example}

\begin{remark}
Sobolev inequalities involving only the Laplacian are usually referred to as
*reduced Sobolev inequalities* and there is a large literature devoted to
them. For example, in the context of the previous Example, since $n/2>1$ it is
possible to replace $D^{2}$ by the Laplacian in (\ref{steine}) (cf. the
discussion in \cite[Chapter V]{St}). The correct *reduced* analog of
(\ref{steine}) when $n=2$ involves a stronger condition on the Laplacian, as
was recently shown by Steinerberger \cite{Stef}, who, in particular, shows
that for a domain $\Omega\subset R^{2}$ of finite measure, and $f\in
C^{2}(\Omega)\cap C(\bar{\Omega}),$ there exists an absolute constant $c>0$
such that%
\[
\max_{x\in\Omega}\left\vert f(x)\right\vert \leq\max_{x\in\partial\Omega
}\left\vert f(x)\right\vert +c\max_{x\in\Omega}\int_{\Omega}\max\{1,\log
\frac{\left\vert \Omega\right\vert }{\left\vert x-y\right\vert ^{2}%
}\}\left\vert \Delta f(y)\right\vert dy.
\]
In particular, when $f$ is zero at the boundary, Steinerberger's result gives%
\begin{equation}
\max_{x\in\Omega}\left\vert f(x)\right\vert \leq c\max_{x\in\Omega}%
\int_{\Omega}\max\{1,\log\frac{\left\vert \Omega\right\vert }{\left\vert
x-y\right\vert ^{2}}\}\left\vert \Delta f(y)\right\vert dy. \label{steine2}%
\end{equation}
By private correspondence Steinerberger showed the author that (\ref{steine2})
implies an inequality of the form%
\begin{equation}
\left\Vert f\right\Vert _{L^{\infty}(\Omega)}\preceq\left\Vert \Delta
f\right\Vert _{L^{1}(\Omega)}+\left\Vert \Delta f\right\Vert _{L(LogL)(\Omega
)}. \label{steine3}%
\end{equation}

Let us informally put forward here that one can develop an approach to
Steinerberger's result (\ref{steine3}) using the symmetrization techniques of
this paper, if one uses a variant of symmetrization inequalities for the
Laplacian, originally obtained by Maz'ya-Talenti, that was recorded in
\cite[Theorem 13 (ii), page 178]{Mm}. We hope to give a detailed discussion elsewhere.
\end{remark}

\section{The Fractional Case}

In this section we remark that a good deal of the analysis can be also adapted
to the fractional case (cf. \cite{59}). Let us go through the details. Let
$X(R^{n})$ be a rearrangement invariant space, and let $\phi_{X}(t)=\left\Vert
\chi_{(0,t)}\right\Vert _{X},$ be its fundamental function. Let $w_{X}$ be the
modulus of continuity associated with $X:$%
\[
w_{X}(t,f)=\sup_{\left\vert h\right\vert \leq t}\left\Vert f(\circ
+h)-f(\circ)\right\Vert _{X}.
\]
Our basic inequality will be (cf. [50] and [59])%
\begin{equation}
f^{\ast\ast}(t)-f^{\ast}(t)\leq c_{n}\frac{w_{X}(t^{1/n},f)}{\phi_{X}(t)},f\in
C_{0}^{\infty}(R^{n}). \label{nueva}%
\end{equation}
Let $\alpha\in(0,1),1\leq p\leq\frac{n}{\alpha},$ $1\leq q\leq\infty.$ Let
(with the usual modification if $q=\infty)$%
\[
\left\Vert f\right\Vert _{\mathring{B}_{p}^{\alpha,q}}=\left\{  \int%
_{0}^{\infty}[t^{-\alpha}w_{L^{p}}(t,f)]^{q}\frac{dt}{t}\right\}  ^{1/q}.
\]

\begin{theorem}
Suppose that $\alpha\in(0,1),1\leq p\leq\frac{n}{\alpha},\frac{1}{\bar{p}%
}=\frac{1}{p}-\frac{\alpha}{n}.$ Then, we have
\[
\left\Vert f\right\Vert _{L(\bar{p},q)}\preceq\left\Vert f\right\Vert
_{\mathring{B}_{p}^{\alpha,q}},f\in C_{0}^{\infty}(R^{n}).
\]

\end{theorem}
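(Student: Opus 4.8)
The plan is to mimic almost verbatim the argument used for Theorem~\ref{teo1}, with the pointwise oscillation estimate \eqref{sob2} replaced by the fractional oscillation estimate \eqref{nueva}, specialized to $X=L^{p}$. First I would record that for $X=L^{p}$ the fundamental function is $\phi_{L^{p}}(t)=t^{1/p}$, so that \eqref{nueva} reads
\[
f^{\ast\ast}(t)-f^{\ast}(t)\leq c_{n}\,t^{-1/p}\,w_{L^{p}}(t^{1/n},f),\qquad f\in C_{0}^{\infty}(R^{n}).
\]
Multiplying by $t^{1/\bar p}=t^{1/p-\alpha/n}$ gives
\[
\bigl(f^{\ast\ast}(t)-f^{\ast}(t)\bigr)\,t^{1/\bar p}\leq c_{n}\,t^{-\alpha/n}\,w_{L^{p}}(t^{1/n},f).
\]
Now I would substitute $s=t^{1/n}$ (so $t=s^{n}$, $\frac{dt}{t}=n\frac{ds}{s}$), which turns the $L(\bar p,q)$-type integral of the right-hand side into
\[
\Bigl\{\int_{0}^{\infty}\bigl[t^{-\alpha/n}w_{L^{p}}(t^{1/n},f)\bigr]^{q}\frac{dt}{t}\Bigr\}^{1/q}
= n^{1/q}\Bigl\{\int_{0}^{\infty}\bigl[s^{-\alpha}w_{L^{p}}(s,f)\bigr]^{q}\frac{ds}{s}\Bigr\}^{1/q}
= n^{1/q}\,\left\Vert f\right\Vert_{\mathring{B}_{p}^{\alpha,q}},
\]
with the obvious supremum modification when $q=\infty$. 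This is the crux: it shows $\bigl\Vert (f^{\ast\ast}-f^{\ast})\,t^{1/\bar p}\bigr\Vert_{L^{q}(dt/t)}\preceq \left\Vert f\right\Vert_{\mathring{B}_{p}^{\alpha,q}}$, i.e. a bound on $\left\Vert f\right\Vert_{L(\infty,q)}$-flavored quantity weighted by $t^{1/\bar p}$.

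It remains to pass from this oscillation bound to the genuine $L(\bar p,q)$ norm of $f$. Here I would split into the two cases exactly as in the proof of Theorem~\ref{teo1}. If $\bar p=\infty$ (that is, $p=n/\alpha$), the left-hand side is by definition $\left\Vert f\right\Vert_{L(\infty,q)}$ and we are already done. If $\bar p<\infty$, I would use the representation $f^{\ast\ast}(t)=\int_{t}^{\infty}\bigl(f^{\ast\ast}(s)-f^{\ast}(s)\bigr)\frac{ds}{s}$ together with Hardy's inequality (as in the passage surrounding \eqref{from}) to dominate $\bigl\Vert f^{\ast\ast}(t)\,t^{1/\bar p}\bigr\Vert_{L^{q}(dt/t)}$ — and hence $\left\Vert f\right\Vert_{L(\bar p,q)}$, since $f^{\ast}\le f^{\ast\ast}$ — by $\bigl\Vert (f^{\ast\ast}-f^{\ast})\,t^{1/\bar p}\bigr\Vert_{L^{q}(dt/t)}$. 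For $q=\infty$ the same representation yields $f^{\ast\ast}(t)\preceq t^{-1/\bar p}\sup_{s}\bigl(f^{\ast\ast}(s)-f^{\ast}(s)\bigr)s^{1/\bar p}$ directly, with no Hardy inequality needed. Chaining the two estimates gives $\left\Vert f\right\Vert_{L(\bar p,q)}\preceq \left\Vert f\right\Vert_{\mathring{B}_{p}^{\alpha,q}}$.

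I do not anticipate a serious obstacle: the only place requiring care is that, unlike the integer-order case, there is no separate $p=1$ restriction to worry about — the estimate \eqref{nueva} already incorporates the oscillation $f^{\ast\ast}-f^{\ast}$ on the left, so Hardy's inequality is applied only on the right (to reconstruct $f^{\ast\ast}$ from the oscillation, which is legitimate for all $\bar p<\infty$) and the argument goes through uniformly for $1\le p\le n/\alpha$. One should, however, double-check the endpoint $\alpha$ near $1$ and near $0$: the hypothesis $\alpha\in(0,1)$ ensures $w_{L^{p}}(t,f)$ is the right object (for $\alpha=1$ one would need first differences replaced by second differences or the gradient), and $p\le n/\alpha$ guarantees $1/\bar p=1/p-\alpha/n\ge 0$ so that $\bar p$ is well defined. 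Modulo these routine verifications, the proof is a direct transcription of the scheme already executed for Theorems~\ref{teo1} and \ref{teo2}, with the single change of variables $s=t^{1/n}$ doing all the real work.
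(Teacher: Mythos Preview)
Your proposal is correct and follows essentially the same route as the paper: specialize \eqref{nueva} to $X=L^{p}$, multiply by $t^{1/\bar p}$, perform the change of variable $s=t^{1/n}$ to identify the Besov seminorm, and then (for $\bar p<\infty$) recover $\|f\|_{L(\bar p,q)}$ from the oscillation quantity via \eqref{from} and Hardy's inequality exactly as in Theorem~\ref{teo1}. The paper's proof is terser---it writes ``It follows readily'' for the last step---but the mechanism you spell out is precisely what is intended.
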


\begin{proof}
Consider first the case $q<\infty.$ Let $X=L^{p},$ then $\phi_{X}(t)=t^{1/p},$
consequently (\ref{nueva}) becomes%
\[
f^{\ast\ast}(t)-f^{\ast}(t)\leq c_{n}\frac{w_{L^{p}}(t^{1/n},f)}{t^{1/p}},f\in
C_{0}^{\infty}(R^{n}),
\]
which yields%
\begin{align*}
\left\{  \int_{0}^{\infty}[(f^{\ast\ast}(t)-f^{\ast}(t))t^{\frac{1}{\bar{p}}%
}]^{q}\frac{dt}{t}\right\}  ^{1/q} &  =\left\{  \int_{0}^{\infty}[(f^{\ast
\ast}(t)-f^{\ast}(t))t^{-\alpha/n}t^{1/p}]^{q}\frac{dt}{t}\right\}  ^{1/q}\\
&  \leq c_{n}\left\{  \int_{0}^{\infty}[t^{-\alpha/n}w_{L^{p}}(t^{1/n}%
,f)]^{q}\frac{dt}{t}\right\}  ^{1/q}\\
&  \simeq\left\{  \int_{0}^{\infty}[t^{-\alpha}w_{L^{p}}(t,f)]^{q}\frac{dt}%
{t}\right\}  ^{1/q}\\
&  \simeq\left\Vert f\right\Vert _{\mathring{B}_{p}^{\alpha,q}}.
\end{align*}
It follows readily that%
\[
\left\Vert f\right\Vert _{L(\bar{p},q)}\preceq\left\Vert f\right\Vert
_{\mathring{B}_{p}^{\alpha,q}},\text{ }f\in C_{0}^{\infty}(R^{n}).
\]
For the case $q=\infty$ we simply go back to
\begin{equation}
f^{\ast\ast}(t)-f^{\ast}(t))t^{\frac{1}{\bar{p}}}\leq c_{n}t^{-\alpha/n}%
w_{p}(t^{1/n},f),\label{antigua}%
\end{equation}
and take a sup over all $t>0$.
\end{proof}

\begin{example}
Note that when $p=\frac{n}{\alpha},$ then $\frac{1}{\bar{p}}=0,$ consequently
if $1\leq q\leq\infty$, we have that for $f\in C_{0}^{\infty}(R^{n}),$
\begin{align}
\left\Vert f\right\Vert _{L(\infty,q)}  &  =\left\{  \int_{0}^{\infty
}[(f^{\ast\ast}(t)-f^{\ast}(t))]^{q}\frac{dt}{t}\right\}  ^{1/q}
\label{nueva2}\\
&  \leq c_{n}\left\Vert f\right\Vert _{\mathring{B}_{\frac{n}{\alpha}}%
^{\alpha,q}}.\nonumber
\end{align}
In particular, if $q=1,$%
\[
\left\Vert f\right\Vert _{L^{\infty}}=\left\Vert f\right\Vert _{L(\infty
,1)}\leq c_{n}\left\Vert f\right\Vert _{\mathring{B}_{\frac{n}{\alpha}%
}^{\alpha,1}},f\in C_{0}^{\infty}(R^{n}).
\]

\end{example}

The corresponding result for Besov spaces anchored on Lorentz spaces follows
the same analysis. Let $1\leq p<\infty,1\leq r\leq\infty,1\leq q\leq
\infty,0<\alpha<1.$ We let (with the usual modification if $q=\infty$)
\[
\left\Vert f\right\Vert _{\mathring{B}_{L(p,r)}^{\alpha,q}}=\left\{  \int%
_{0}^{\infty}[t^{-\alpha}w_{L(p,r)}(t,f)]^{q}\frac{dt}{t}\right\}  ^{1/q}.
\]
Note that since
\[
\phi_{L(p,r)}(t)\sim t^{1/p},1\leq p<\infty,1\leq r\leq\infty,
\]
our basic inequality now takes the form%
\begin{equation}
f^{\ast\ast}(t)-f^{\ast}(t)\leq c_{n}\frac{w_{L(p,r)}(t^{1/n},f)}{t^{1/p}%
},f\in C_{0}^{\infty}(R^{n}),1\leq p<\infty,1\leq r\leq\infty. \label{denueva}%
\end{equation}
Then,\textit{ mutatis mutandi} we have

\begin{theorem}
Suppose that $\alpha\in(0,1),1\leq p\leq\frac{n}{\alpha},\frac{1}{\bar{p}%
}=\frac{1}{p}-\frac{\alpha}{n}.$ Then, if $p>1,1\leq r\leq\infty,$ or $p=r=1,$
we have
\[
\left\Vert f\right\Vert _{L(\bar{p},q)}\preceq\left\Vert f\right\Vert
_{\mathring{B}_{L(p,r)}^{\alpha,q}},f\in C_{0}^{\infty}(R^{n}).
\]

\end{theorem}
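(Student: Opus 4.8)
The plan is to mirror the proof of the preceding theorem (the one for $\mathring{B}_p^{\alpha,q}$), replacing the scalar estimate $\phi_{L^p}(t)=t^{1/p}$ with the Lorentz-space analog recorded in \eqref{denueva}. First I would treat the case $q<\infty$. Taking $X=L(p,r)$ in the basic inequality \eqref{nueva}, and using $\phi_{L(p,r)}(t)\sim t^{1/p}$, we obtain \eqref{denueva}. Multiplying through by $t^{1/\bar p}=t^{1/p-\alpha/n}$ gives
\[
(f^{\ast\ast}(t)-f^{\ast}(t))\,t^{1/\bar p}\leq c_{n}\,t^{-\alpha/n}\,w_{L(p,r)}(t^{1/n},f),
\]
so raising to the $q$-th power, integrating $\frac{dt}{t}$, and performing the change of variables $s=t^{1/n}$ (which turns $t^{-\alpha/n}$ into $s^{-\alpha}$ and preserves the Haar measure $\frac{dt}{t}=n\frac{ds}{s}$) yields
\[
\left\{\int_{0}^{\infty}[(f^{\ast\ast}(t)-f^{\ast}(t))t^{1/\bar p}]^{q}\frac{dt}{t}\right\}^{1/q}\preceq\left\{\int_{0}^{\infty}[s^{-\alpha}w_{L(p,r)}(s,f)]^{q}\frac{ds}{s}\right\}^{1/q}=\left\Vert f\right\Vert_{\mathring{B}_{L(p,r)}^{\alpha,q}}.
\]
For $q=\infty$ one simply takes a supremum in the pointwise estimate above instead of integrating.

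It then remains to identify the left-hand side with $\left\Vert f\right\Vert_{L(\bar p,q)}$, and this is exactly the point where the restriction ``$p>1$, or $p=r=1$'' enters — it is the same dichotomy already encountered in Theorem~\ref{teo1}. If $\bar p=\infty$ (i.e. $p=n/\alpha$), the quantity on the left \emph{is} $\left\Vert f\right\Vert_{L(\infty,q)}$ by the definition \eqref{berta1}, and there is nothing further to do. If $\bar p<\infty$, one writes $f^{\ast\ast}(t)=\int_{t}^{\infty}(f^{\ast\ast}(s)-f^{\ast}(s))\frac{ds}{s}$ as in \eqref{from} and applies Hardy's inequality to pass from the $L(\infty,q)$-type expression in $f^{\ast\ast}-f^{\ast}$ to $\left\{\int_0^\infty [f^{\ast\ast}(t)t^{1/\bar p}]^q\frac{dt}{t}\right\}^{1/q}$, which dominates $\left\Vert f\right\Vert_{L(\bar p,q)}$; this Hardy step requires $1/\bar p>0$, automatically true when $\bar p<\infty$. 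The subtlety is instead in the \emph{right-hand side}: for this to be finite and for the argument via \eqref{denueva} to be legitimate one needs $w_{L(p,r)}(t,f)$ to behave well, and when $p=1$ the space $L(1,r)$ with $r>1$ is not normable (it is only quasi-normed, and $L(1,r)\not\subset L^1_{\mathrm{loc}}$ fails the triangle inequality needed to control the modulus of continuity and to run \eqref{nueva}). Hence one restricts to $p=r=1$, where $L(1,1)=L^1$ and everything reduces to the already-proved $L^1$ case; for $p>1$ the space $L(p,r)$ is (equivalent to) a norm for every $r$, so no restriction on $r$ is needed.

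The main obstacle, then, is not any hard estimate but a bookkeeping/hypothesis issue: verifying that \eqref{nueva} (equivalently \eqref{denueva}) is available in the Lorentz setting with the stated constants, and checking that the endpoint $p=1$ genuinely forces $r=1$ for the chain of inequalities — Hardy's inequality, the identification of $\left\Vert\cdot\right\Vert_{L(\bar p,q)}$, and the self-improvement inequality \eqref{nueva} — to all hold simultaneously. Once those hypotheses are in place, the computation is \emph{mutatis mutandi} identical to the previous theorem, so I would present it compactly, emphasizing only the change of variables $s=t^{1/n}$, the role of $\phi_{L(p,r)}(t)\sim t^{1/p}$, and the $p=1\Rightarrow r=1$ reduction to Theorem~\ref{teo1}.
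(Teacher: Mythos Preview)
Your proposal is correct and follows essentially the same route as the paper, which in fact offers no separate argument for this theorem beyond the phrase \emph{mutatis mutandi}: one replaces $\phi_{L^p}(t)=t^{1/p}$ by $\phi_{L(p,r)}(t)\sim t^{1/p}$ in \eqref{nueva} to obtain \eqref{denueva}, multiplies by $t^{1/\bar p}$, integrates (or takes a sup) and changes variables $s=t^{1/n}$, then identifies the left-hand side with $\|f\|_{L(\bar p,q)}$ exactly as before. Your added explanation of why the restriction ``$p>1$ or $p=r=1$'' appears---namely that \eqref{nueva} is stated for rearrangement-invariant \emph{Banach} function spaces and $L(1,r)$ is not normable for $r>1$---is the natural reading of the paper's hypotheses; the parenthetical remark about $L(1,r)\not\subset L^1_{\mathrm{loc}}$ is a bit tangential (non-normability alone is the obstruction), but this does not affect the argument.
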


\section{More Examples and Remarks}

\subsection{On the role of the $L(\infty,q)$ spaces}

In the range $1<p<n,$ (\ref{sobol1}) and (\ref{sobsup}) yield the classical
Sobolev inequalities. Suppose that $p=n.$ Then $\frac{1}{\bar{p}}=0,$ and
(\ref{sobol1}) becomes%
\begin{equation}
\left\Vert f\right\Vert _{L(\infty,q)}\preceq\left\Vert \nabla f\right\Vert
_{L(n,q)},1\leq q\leq\infty. \label{hbr}%
\end{equation}
When dealing with domains $\Omega$ with $\left\vert \Omega\right\vert
<\infty,$ from (\ref{sob2}) we get, $1\leq q\leq\infty,$%
\begin{equation}
\left\{  \int_{0}^{\left\vert \Omega\right\vert }\left(  f^{\ast\ast
}(s)-f^{\ast}(s)\right)  ^{q}\frac{ds}{s}\right\}  ^{1/q}\preceq\left\Vert
\nabla f\right\Vert _{L(n,q)},\text{ }f\in C_{0}^{\infty}(\Omega).
\label{hbr1}%
\end{equation}
To compare this result with classical results it will be convenient to
normalize the *norm* as follows%
\[
\left\Vert f\right\Vert _{L(\infty,q)(\Omega)}=\left\{  \int_{0}^{\left\vert
\Omega\right\vert }\left(  f^{\ast\ast}(s)-f^{\ast}(s)\right)  ^{q}\frac
{ds}{s}\right\}  ^{1/q}+\frac{1}{\left\vert \Omega\right\vert }\int_{\Omega
}\left\vert f(x)\right\vert dx.
\]

\begin{remark}
Note that this does not change the nature of (\ref{hbr1}) since if $f$ has
compact support on $\Omega,$ then if we let $t\rightarrow\left\vert
\Omega\right\vert $ in
\[
f^{\ast\ast}(t)-f^{\ast}(t)\leq c_{n}t^{1/n}(\nabla f)^{\ast\ast}(t)
\]
we find that%
\begin{align*}
\frac{1}{\left\vert \Omega\right\vert }\int_{\Omega}\left\vert f(x)\right\vert
dx  &  =f^{\ast\ast}(\left\vert \Omega\right\vert )\leq\left\vert
\Omega\right\vert ^{1/n-1}\left\Vert \nabla f\right\Vert _{L^{1}(\Omega)}\\
&  \leq\left\Vert \nabla f\right\Vert _{L(n,q)}.
\end{align*}

\end{remark}

Let us consider the case $q=n.$ It was shown in \cite[page 1227]{7} (the so
called Hansson-Brezis-Wainger-Maz'ya embedding) that
\begin{align*}
\left\{  \int_{0}^{\left\vert \Omega\right\vert }\left(  \frac{f^{\ast\ast
}(s)}{1+\log\frac{\left\vert \Omega\right\vert }{s}}\right)  ^{n}\frac{ds}%
{s}\right\}  ^{1/n}  &  \preceq\left\Vert f\right\Vert _{L(\infty,n)(\Omega
)}\\
&  \preceq\left\Vert \nabla f\right\Vert _{L(n,q)}+\left\Vert f\right\Vert
_{L^{1}(\Omega)}.
\end{align*}
Therefore, (\ref{hbr}) implies an improvement on the
Hansson-Brezis-Wainger-Maz'ya embedding. The connection with $BMO$ appears
when $q=\infty,$ for then we have%
\[
\left\Vert f\right\Vert _{L(\infty,\infty)}\preceq\left\Vert \nabla
f\right\Vert _{L(n,\infty)},f\in C_{0}^{\infty}(R^{n}).
\]

In the case $p=n,q=1.$ Then, (\ref{sobol1}) gives
\begin{equation}
\left\Vert f\right\Vert _{L(\infty,1)}\preceq\left\Vert \nabla f\right\Vert
_{L(n,1)},f\in C_{0}^{\infty}(R^{n}), \label{comparada}%
\end{equation}
which ought to be compared with the following (cf. \cite{St1})%
\begin{equation}
\left\Vert f\right\Vert _{L^{\infty}}\preceq\left\Vert \nabla f\right\Vert
_{L(n,1)},f\in C_{0}^{\infty}(R^{n}). \label{comparada1}%
\end{equation}
Indeed, let us show that (\ref{comparada}) gives (\ref{comparada1}). From
\[
\frac{d}{dt}(tf^{\ast\ast}(t))=\frac{d}{dt}(\int_{0}^{t}f^{\ast}%
(s)ds)=f^{\ast}(t),
\]
it follows (by the product rule of Calculus) that%
\[
\frac{d}{dt}(f^{\ast\ast}(t))=-\left(  \frac{f^{\ast\ast}(t)-f^{\ast}(t)}%
{t}\right)  .
\]
Therefore, if $f$ has compact support$,$%
\begin{align*}
\left\Vert f\right\Vert _{L(\infty,1)}  &  =\lim_{t\rightarrow\infty}\int%
_{0}^{t}\left(  f^{\ast\ast}(s)-f^{\ast}(s)\right)  \frac{ds}{s}%
=\lim_{t\rightarrow\infty}\left(  f^{\ast\ast}(0)-f^{\ast\ast}(t)\right) \\
&  =\left\Vert f\right\Vert _{L^{\infty}}-\lim_{t\rightarrow\infty}\frac{1}%
{t}\left\Vert f\right\Vert _{L^{1}}\\
&  =\left\Vert f\right\Vert _{L^{\infty}}.
\end{align*}

\subsection{The Gagliardo-Nirenberg Inequality and Weak type vs Strong Type}

It is well known that the Sobolev inequalities have remarkable self improving
properties. In this section we wish to discuss the connections of these self
improving effects with symmetrization. The study is important when trying to
extend Sobolev inequalities to more general contexts.

We consider three forms of \ the Gagliardo-Nirenberg inequality. The strong
form of the Gagliardo-Nirenberg inequality%
\begin{equation}
\left\Vert f\right\Vert _{L(n^{\prime},1)}\preceq\left\Vert \nabla
f\right\Vert _{L^{1}},f\in C_{0}^{\infty}(R^{n}), \label{via}%
\end{equation}
which implies the classical version of the Gagliardo-Nirenberg inequality%

\begin{equation}
\left\Vert f\right\Vert _{L^{n^{\prime}}}\preceq\left\Vert \nabla f\right\Vert
_{L^{1}},f\in C_{0}^{\infty}(R^{n}). \label{via2}%
\end{equation}
which in turn implies the weaker version of the Gagliardo-Nirenberg inequality%
\begin{equation}
\left\Vert f\right\Vert _{L(n^{\prime},\infty)}\preceq\left\Vert \nabla
f\right\Vert _{L^{1}},f\in C_{0}^{\infty}(R^{n}). \label{via1}%
\end{equation}

Let us now show that (\ref{via1}) implies (\ref{via}). In \cite[(55) page
261]{A} we showed that (\ref{via1}) implies the symmetrization inequality%
\begin{equation}
f^{\ast\ast}(t)-f^{\ast}(t)\preceq t^{1/n}(\nabla f)^{\ast\ast}(t). \label{v2}%
\end{equation}
Conversely, (\ref{v2}) can be rewritten as%
\begin{equation}
(f^{\ast\ast}(t)-f^{\ast}(t))t^{1-1/n}\preceq\int_{0}^{t}(\nabla f)^{\ast
}(s)ds. \label{v3}%
\end{equation}
Consequently, taking a sup over all $t>0$ we see that (\ref{v2}) in turn
implies (\ref{via1}). Moreover, let us show that (\ref{v2}) implies the
isoperimetric inequality (here we ignore the issue of constants to simplify
the considerations). To see this suppose that $E$ is a bounded set with smooth
border and let $f_{n}$ be \ a sequence of smooth functions with compact
support such that $f_{n}\rightarrow\chi_{E}$ in $L^{1},$ with%
\[
\left\Vert \nabla f_{n}\right\Vert _{L^{1}}\rightarrow Per(E)
\]
where $Per(E)$ is the perimeter of $E.$ Selecting $t>\left\vert E\right\vert
,$ we see that $(f_{n}^{\ast\ast}(t)-f_{n}^{\ast}(t))\rightarrow\frac{1}%
{t}\left\vert E\right\vert ,$ therefore from (\ref{v3}) we find%
\[
\frac{1}{t}\left\vert E\right\vert t^{1-1/n}\preceq Per(E)
\]
therefore letting $t\rightarrow\left\vert E\right\vert ,$ gives%
\[
\left\vert E\right\vert ^{1-1/n}\preceq Per(E).
\]

This concludes our proof that (\ref{via1}) is equivalent to (\ref{via}) since
it is a well known consequence of the co-area formula that the isoperimetric
inequality is equivalent to (\ref{via}) (cf. \cite{67}). At the level of
symmetrization inequalities we have shown in \cite[page 263]{A} that
(\ref{via}) implies the symmetrization inequality%
\begin{equation}
\int_{0}^{t}(f^{\ast\ast}(s)-f^{\ast}(s))s^{1-1/n}\frac{ds}{s}\preceq\int%
_{0}^{t}(\nabla f)^{\ast}(s)ds. \label{v4}%
\end{equation}
Moreover, conversely, taking a sup over all $t>0$ in (\ref{v4})$,$ shows that
(\ref{v4}) implies (\ref{via}).

A direct proof of the fact that (\ref{v4}) implies (\ref{v2}) is
straightforward. Indeed, starting with%
\[
\int_{t/2}^{t}\left(  f^{\ast\ast}(s)-f^{\ast}(s)\right)  s^{1-1/n}\frac
{ds}{s}\preceq\int_{0}^{t}(\nabla f)^{\ast}(s)ds,
\]
and using the fact that $\left(  f^{\ast\ast}(t)-f^{\ast}(t\right)
)t=\int_{f^{\ast}(t)}^{\infty}\lambda_{f}(s)ds$ increases, we see that%
\[
\left(  f^{\ast\ast}(t/2)-f^{\ast}(t/2)\right)  t^{1-1/n}\preceq\int_{0}%
^{t}(\nabla f)^{\ast}(s)ds,
\]
and (\ref{v2}) follows readily. The proof that we give now, showing that
(\ref{v2}) implies (\ref{v4}) is indirect. First, as we have seen (\ref{v2})
is equivalent to the validity of (\ref{via1}) which in turn implies the
following inequality\footnote{Note that by P\'{o}lya-Szeg\"{o}, $f^{\ast}$ is
absolutely continuous} due to Maz'ya-Talenti (cf. \cite{65}),
\begin{equation}
t^{1-1/n}[-f^{\ast}(t)]^{\prime}\preceq\frac{d}{dt}(\int_{\{\left\vert
f(x)\right\vert >f^{\ast}(t)\}}\left\vert \nabla f(x)\right\vert
dx).\label{v5}%
\end{equation}
To proceed further we need a new expression for $f^{\ast\ast}(t)-f^{\ast}(t),$
which we derive integrating by parts:%
\begin{align}
f^{\ast\ast}(t)-f^{\ast}(t) &  =\frac{1}{t}\int_{0}^{t}[f^{\ast}(s)-f^{\ast
}(t)]ds\nonumber\\
&  =\frac{1}{t}\left.  (s[f^{\ast}(s)-f^{\ast}(t)])\right\vert _{s=0}%
^{s=t}+\frac{1}{t}\int_{0}^{t}s[-f^{\ast}(s)]^{\prime}ds\nonumber\\
&  =\frac{1}{t}\int_{0}^{t}s[-f^{\ast}(s)]^{\prime}ds.\label{numer}%
\end{align}
Therefore,%
\begin{align*}
\int_{0}^{t}\left(  f^{\ast\ast}(s)-f^{\ast}(s)\right)  s^{-1/n}ds &
=\int_{0}^{t}\frac{1}{s}\int_{0}^{s}u[-f^{\ast}(u)]^{\prime}dus^{-1/n}ds\\
&  =-n\int_{0}^{t}\left(  \int_{0}^{s}u[-f^{\ast}(u)]^{\prime}du\right)
ds^{-1/n}\\
&  =\left.  -n\left(  \int_{0}^{s}u[-f^{\ast}(u)]^{\prime}du\right)
s^{-1/n}\right\vert _{s=0}^{s=t}+n\int_{0}^{t}s[-f^{\ast}(s)]^{\prime}%
s^{-1/n}ds.
\end{align*}
We claim that we can discard the integrated term since its contribution makes
the right hand side smaller. To see this note that, since (\ref{v2}) holds,
(\ref{numer}) implies%
\[
\left(  \int_{0}^{s}u[-f^{\ast}(u)]^{\prime}du\right)  s^{-1/n}=\left(
f^{\ast\ast}(s)-f^{\ast}(s)\right)  s^{1-1/n}\preceq\int_{0}^{s}(\nabla
f)^{\ast}(u)du,
\]
which in turn implies that $\left(  \int_{0}^{s}u[-f^{\ast}(u)]^{\prime
}du\right)  s^{-1/n}\rightarrow0$ when $s\rightarrow0.$ Consequently, we can
continue our estimates to obtain,%
\begin{align*}
\int_{0}^{t}\left(  f^{\ast\ast}(s)-f^{\ast}(s)\right)  s^{-1/n}ds &  \preceq
n\int_{0}^{t}s[-f^{\ast}(s)]^{\prime}s^{-1/n}ds\\
&  \preceq\int_{0}^{t}[-f^{\ast}(s)]^{\prime}s^{1-1/n}ds\\
&  \preceq\int_{0}^{t}\frac{d}{dt}(\int_{\{\left\vert f(x)\right\vert
>f^{\ast}(s)\}}\left\vert \nabla f(x)\right\vert dx)ds\text{ \ \ (by(\ref{v5}%
))}\\
&  \leq\int_{\{\left\vert f(x)\right\vert >f^{\ast}(t)\}}\left\vert \nabla
f(x)\right\vert dx\\
&  \leq\int_{0}^{t}\left(  \nabla f\right)  ^{\ast}(s)ds.
\end{align*}

Underlying these equivalences between weak and strong inequalities is the
Maz'ya truncation principle (cf. \cite{34}) which, informally, shows that,
contrary to what happens for most other inequalities in analysis, in the case
of Sobolev inequalities: weak implies strong!

In \cite{A} we showed the connection of the truncation method to a certain
form of extrapolation of inequalities initiated by Burkholder and Gundy. The
import of these considerations is that the symmetrization inequalities hold in
a very general context and allow for some unification of Sobolev inequalities.
For example, the preceding analysis and the corresponding symmetrization
inequalities can be extended for gradients defined in metric measure spaces
using a variety of methods. One method, often favored by probabilists, goes
via defining the gradient by suitable limits, in this case, under suitable
assumptions, we can use isoperimetry to reformulate the symmetrization
inequalities and embeddings (cf. \cite{63}, \cite{64}, and the references
therein). In the context of metric probability spaces with concave
isoperimetric profile $I,$ the basic inequality takes the form%
\begin{equation}
f^{\ast\ast}(t)-f^{\ast}(t)\leq\frac{t}{I(t)}\left\vert \nabla f\right\vert
^{\ast\ast}(t).\label{v6}%
\end{equation}
For example, if we consider $R^{n}$ with Gaussian measure, the isoperimetric
profile satisfies%
\[
I(t)\sim t(\log\frac{1}{t})^{1/2},\text{ }t\text{ near zero.}%
\]
Thus in the Gaussian case (\ref{v6}) yields logarithmic Sobolev inequalities
(cf. \cite{Mm}, \cite{63}, \cite{64}, for more on this story). A somewhat
different approach, which yields however similar symmetrization inequalities,
obtains if we define the gradient indirectly via Poincar\'{e} inequalities and
then derive the symmetrization inequalities using maximal inequalities. The
analysis here depends an a large body of classical research on maximal
functions and Poincar\'{e} inequalities (for the symmetrization inequalities
that result we refer to \cite{47}, and Kalis' 2007 PhD thesis at FAU).

\end{document}